\newtheorem{theorem}{Theorem}
\newtheorem{lemma}{Lemma}
\newtheorem{proposition}{Proposition}
\newtheorem{remark}{Remark}
\newtheorem{example}{Example}
\newtheorem{problem}{Problem}
\def\min{\operatorname{Minimize}}
\def\const{\operatorname{subject~to~}}
\DeclareMathOperator{\Tol}{Tol}
\DeclareMathOperator{\diag}{diag}
\DeclareMathOperator{\sgn}{sgn}
\newcommand{\lng}{\langle}
\newcommand{\rng}{\rangle}
\newcommand{\R}{\mathbb R}
\newenvironment{proof}{{\noindent\bf Proof.}}{\hfill$\Box$\\}
\newenvironment{proof3}{{\noindent\bf Proof of Theorem 3.}}{\hfill$\Box$\\}
\begin{document}

\title{A semi-smooth Newton method for solving  convex quadratic   programming problem under   simplicial cone constraint 
\thanks{{\it 1991 A M S Subject Classification.} Primary 90C33;
Secondary 15A48, {\it Key words and phrases.} Metric projection onto simplicial cones}}

\author{ J. G. Barrios \thanks{IME/UFG, Campus II- Caixa Postal 131, 
Goi\^ania, GO, 74001-970, Brazil (e-mail:{\tt
numeroj@gmail.com}).  The author was supported in part by CAPES.}  
\and
 O. P. Ferreira\thanks{IME/UFG, Campus II- Caixa Postal 131, 
Goi\^ania, GO, 74001-970, Brazil (e-mail:{\tt
orizon@ufg.br}).  The author was supported in part by
FAPEG, CNPq Grants 4471815/2012-8,  305158/2014-7 and PRONEX--Optimization(FAPERJ/CNPq).}  
\and
S. Z. N\'emeth \thanks{School of Mathematics, The University of Birmingham, The Watson Building, 
Edgbaston, Birmingham B15 2TT, United Kingdom
(e-mail:{\tt nemeths@for.mat.bham.ac.uk}). The author was supported in part by 
the Hungarian Research Grant OTKA 60480.} }

\maketitle

\begin{abstract}
In this paper the simplicial cone constrained convex quadratic programming problem is studied.  The optimality conditions of this problem consist
in a linear complementarity problem. This fact, under a  suitable condition,  leads to an equivalence between the simplicial cone constrained 
convex quadratic programming problem and the one of finding the unique solution of a nonsmooth system of equations. It is shown that a 
semi-smooth Newton method applied to this nonsmooth system  of equations  is always  well defined and under a mild assumption on the simplicial 
cone the method  generates a  sequence  that  converges linearly to its solution. Besides, we also show that  the generated 
sequence is   bounded  for any  starting point and a formula for any accumulation point of this sequence is presented. The presented numerical 
results suggest that this approach achieves accurate solutions to large problems in few iterations. 

\noindent
{\bf Keywords:} Quadratic programming,  convex set, convex cone,   semi-smooth Newton method.

\end{abstract}

\section{Introduction}
The  purpose  of  this  paper  is  to  motivate  and  describe a  new  approach  for solving a special class of  constrained convex  quadratic programming problems, namely,   simplicial cone constrained ones, by using the semi-smmooth Newton's method, and to present the results of some computational experiments designed to investigate its  practical viability. 

Simplicial cone constrained convex quadratic  programming  arises  as an important problem in its own right, it has  an important  subclass of    
positively constrained  convex quadratic programming, namely, the positively constrained least-squares problems, or equivalently the problem 
of   projecting the point   onto a  simplicial cone.  The interest in the subject of projection arises in several situations,   having a wide range of applications in pure and applied mathematics such as  Convex Analysis 
(see e.g. \cite{HiriartLemarecal1}),   Optimization (see  e.g. \cite{BuschkeBorwein96,censor07,censor01,Frick1997,scolnik08,ujvari2007projection}), Numerical Linear Algebra (see e.g. \cite{Stewart77}), Statistics  (see e.g. \cite{BerkMarcus96,Dykstra83,Xiaomi1998}), Computer Graphics (see e.g. \cite{Fol90} ) and  Ordered 
Vector Spaces (see e.g. \cite{AbbasNemeth2012,IsacNem86,IsacNem92,NemethNemeth2009,Nemeth20091,Nemeth2010-2}).   More specifically, the projection onto a polyhedral  cone,  which has as a special case the projection onto a simplicial one, is a problem of high impact on scientific community\footnote{see the popularity of the 
Wikimization page Projection on Polyhedral Cone at http://www.convexoptimization.com/wikimization/index.php/Special:Popularpages}.  The geometric  nature of this problem makes it particularly interesting and important  in many areas of science and technology such as   Statistics~(see e.g. \cite{Xiaomi1998}), 
Computation~(see e.g. \cite{Huynh1992}), Optimization (see  e.g.\cite{Morillas2005,ujvari2007projection}) and Ordered Vector Spaces (see e.g. \cite{NemethNemeth2009}). 

 The projection onto a  general simplicial cone is difficult and computationally expensive, this problem has been studied e.g. in \cite{AlSultanMurty1992,EkartNemethNemeth2009,Frick1997,MurtyFathi1982,NemethNemeth2009,ujvari2007projection}.  It is a special convex quadratic
program and its  KKT optimality conditions consists in  a linear complementarity problem (LCP) associated with it,  see e.g
\cite{Murty1988,MurtyFathi1982,ujvari2007projection}.  Therefore, the problem of projecting onto  simplicial cones can be solved by  active set
methods \cite{Bazaraa2006,LiuFathi2011,LiuFathi2012,Murty1988} or any algorithms for solving LCPs,  see e.g  \cite{Bazaraa2006,Murty1988} and
special methods based on its geometry,   see e.g \cite{MurtyFathi1982,Murty1988}. Other fashionable ways to solve this problem are based on
the classical von Neumann algorithm (see e.g. the Dykstra algorithm \cite{DeutschHundal1994,Dykstra83,Shusheng2000}). Nevertheless, these 
methods are also quite expensive (see the numerical results in \cite{Morillas2005} and the remark preceding section 6.3 in 
\cite{MingGuo-LiangHong-BinKaiWang2007}).

The KKT optimality conditions  of simplicial cone constrained convex quadratic  programming consist  in  a  linear complementarity problem. Under
a  suitable condition, this leads to an equivalence between the simplicial cone constrained convex quadratic  programming problem  and the one of
finding the unique solution of a nonsmooth system of equations.  It is worth pointing out that a similar equation has been studied by Mangasaria 
in \cite{Mangasarian2009}, which have used  the semi-smooth Newton method for solving that equation, namely, an absolute value equation.   
Following the idea of \cite{Mangasarian2009},  we apply the semi-smooth Newton's method, see \cite{LiSun93},   to find a unique solution of the  associated nonsmooth
system of equations, which generates the solution of the simplicial cone constrained convex quadratic programming. Under a mild assumption on the
simplicial cone we show that the method  generates a  sequence  that  converges linearly to the  solution of the  associated  system of
equations.   This new approach has potential advantages over existing methods. The main advantage appears to be the  ability to achieve accurate
solutions to large problems in relatively few iterations. The global and linear convergence properties partially explain this good behavior. Our
numerical results suggest that for a given problem class, the number of required iterations is almost unchanged.  The numerical results also indicate a remarkable  robustness   with respect  to the starting point. 
  
  The organization of the paper is as follows.  In Section~\ref{sec:int.1},  some notations and  basic results  used in the paper are presented.
  In the beginning of Section~\ref{sec: defbp} our main problem, the simplicial cone constrained convex quadratic programming problem, is 
  presented. In Section~\ref{sec:seaqp} a semi-smooth equation is presented whose any solution generates a solution of our convex quadratic 
  programming problem and an existence and uniqueness result of the solution for this semi-smooth equation is obtained. In 
  Section~\ref{sec:ssnm2}  we  state and prove a convergence  theorem on the semi-smooth Newton method for  finding the solution of the 
  semi-smooth equation associated to the simplicial cone constrained convex quadratic programming problem. In Section~\ref{sec:ctest} we present 
  some computational tests.
\subsection{Notations and auxiliary results} \label{sec:int.1}

In this subsection we fix the notations and presend some auxiliary results used throughout the paper.
Let $\R^n$ denote the $n$-dimensional Euclidean space and let 
$\lng\cdot,\cdot\rng$  be the canonical scalar product and $\|\cdot\|$ be the norm generated by it.    The  $i$-th component of a vector $x
\in\R^n$ is denoted by $x_i$ for every $i=1,\ldots,n$. Define the nonnegative orthant as 
$$
\R^n_+ :=\{x\in\R^n~:~x_i\ge 0\,,\,j=1,\ldots,n\}. 
$$
For $x\in \R^n$, $\sgn(x)$ will denote a vector with components equal to $1$, $0$ or $-1$ depending on whether the corresponding component of the
vector $x$ is positive, zero or negative.    If $a\in\R$ and $x\in\R^n$, then denote $a^+:=\max\{a,0\}$, $a^-:=\max\{-a,0\}$ and $x^+$, $x^-$ and
$|x|$  the vectors with  $i$-th component equal to $(x_ i)^+$, $(x_ i)^-$ and  $|x_i|$, respectively. From the definitions of $x^+$ and  $x^-$ it easy to conclude that 
\begin{equation} \label{eq;drp}
x=x^+- x^-, \qquad  x^+ \in \R^n_+, \qquad   x^- \in \R^n_+, \qquad \langle  x^+,   x^-\rangle=0,  \qquad \forall ~x\in\R^n. 
\end{equation}
\begin{remark} \label{re:ppo}
It is well know that the projection onto a convex set  is continuous and nonexpansive, see \cite{HiriartLemarecal1}.  Since  the projection of the point    \(x \in\R^n\) onto the  nonnegative orthant is \(x^+\), we conclude that   \(\|z^+-w^+\|\leq \|z-w\|\), for all \(z, w \in\R^n\). 
\end{remark}
The set of all $m \times n$ matrices with real entries is denoted by $\R^{m \times n}$ and    $\R^n\equiv \R^{n \times 1}$.  The matrix $I$
denotes the $n\times n$ identity matrix.  If $x\in \R^n$ then $\diag (x)$ will denote an  $n\times n$  diagonal matrix with $(i,i)$-th entry equal
to $x_i$, $i=1,\dots,n$. For an $M \in \R^{n\times n}$ consider the norm defined by  $\|M\|:=\max_{x\ne0}\{\|Mx\|~:~  x\in \R^{n}, ~\|x\|=1\}$. This definition implies
\begin{equation} \label{eq:np}
\|Mx\|\leq \|M\|\|x\|, \qquad \|LM\|\le\|L\|\|M\|, 
\end{equation}
for any  matrices $L, M \in \R^{n\times n}$.  
\begin{lemma}[Banach's Lemma] \label{lem:ban}
Let $E \in \R^{n\times n}$   and   $I$ the $n\times n$ identity matrix. If  $\|E\|<1$,  then $E-I$
is invertible and  $ \|(E-I)^{-1}\|\leq 1/\left(1-
\|E\|\right). $
\end{lemma}
We will call a closed set $K\subset\R^n$ with nonempty interior a \emph{cone} if the following conditions hold:
\begin{enumerate}
	\item $\lambda x+\mu y\in K$ for any $\lambda,\mu\ge0$ and $x,y\in K$,
	\item $x,-x\in K$ implies $x=0$.
\end{enumerate}
Let \(K \subset \R^n\) be a cone. The {\it dual cone} of \( K\) is the following set  
\[
 K^*:=\{ x\in \R^n \mid  \langle x, y \rangle\geq 0, \forall \, y\in K\}.
\]
The   \emph{simplicial cone} associated to a   nonsingular matrix  $A \in \R^{n\times n} $  is defined by
\begin{equation} \label{d:sc}
A\R^n_+:=\{Ax~:~ x\in\R^n_+\}, 
\end{equation}
The following result follows from the  definition of the dual of a cone. For a proof see for example \cite{AbbasNemeth2012}.
\begin{lemma}\label{le:lad}  
Let $A$ be an $n\times n$ nonsingular matrix. Then,
\[
(A\mathbb{R}^n_+)^*=(A^\top)^{-1}\mathbb{R}^n_+.
\]
\end{lemma}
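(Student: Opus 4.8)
The plan is to prove the claimed set equality by a short chain of equivalences, reducing everything to the self-duality of the nonnegative orthant. First I would unfold the definition of the dual cone: a vector $x$ lies in $(A\R^n_+)^*$ precisely when $\langle x, y\rangle \geq 0$ for every $y \in A\R^n_+$. By the definition~\eqref{d:sc} of the simplicial cone, every such $y$ can be written as $y = Az$ with $z \in \R^n_+$, so the membership condition becomes $\langle x, Az\rangle \geq 0$ for all $z \in \R^n_+$.

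The pivotal step is to transfer the matrix across the inner product by means of the adjoint identity $\langle x, Az\rangle = \langle A^\top x, z\rangle$. This recasts the condition as $\langle A^\top x, z\rangle \geq 0$ for all $z \in \R^n_+$, which states exactly that $A^\top x$ belongs to the dual cone $(\R^n_+)^*$.

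At this point the conceptual heart of the argument is the self-duality of the nonnegative orthant, namely $(\R^n_+)^* = \R^n_+$. I would establish this by testing the defining inequality against the standard basis vectors $e_i \in \R^n_+$: for a vector $w$, the requirement $\langle w, e_i\rangle = w_i \geq 0$ for every $i$ is equivalent to $w \in \R^n_+$, while conversely any $w \in \R^n_+$ clearly pairs nonnegatively with every element of $\R^n_+$. Applying this with $w = A^\top x$ shows that $x \in (A\R^n_+)^*$ if and only if $A^\top x \in \R^n_+$.

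Finally, since $A$ is nonsingular, so is $A^\top$, and the condition $A^\top x \in \R^n_+$ is equivalent to $x = (A^\top)^{-1} w$ for some $w \in \R^n_+$, that is, $x \in (A^\top)^{-1}\R^n_+$. Chaining these equivalences yields the asserted identity $(A\R^n_+)^*=(A^\top)^{-1}\R^n_+$. I do not expect a genuine obstacle here; the only points demanding mild care are keeping the roles of $A$ and $A^\top$ straight across the adjoint step, and invoking nonsingularity exactly where the inverse $(A^\top)^{-1}$ is formed.
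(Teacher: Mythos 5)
Your proof is correct and is exactly the standard argument (unfold the definition of the dual cone, move $A$ across the inner product, use self-duality of $\R^n_+$, and invert $A^\top$); the paper itself gives no proof of this lemma, deferring instead to a cited reference, so there is nothing to contrast it with.
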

We will   need the following  result, for a proof  combine Proposition~{2A.3}  with {Theorem~{2A.6} of \cite{DontRockBook2009}.
\begin{theorem} \label{th:pol}
Let   \(\varphi :\R^n \to \R\) be a differentiable  convex function and $K$ be a closed,  convex cone in \(\R^n \). Then \(\bar{x}\)  is a solution of the problem
\begin{align*}  
 & \min  ~ \varphi(x) \\  \notag
 & \const  x \in K, 
\end{align*}
if  and only if  \( \bar{x}\) satisfies   the following optimality conditions
\[
x \in K,  \qquad \nabla \varphi(x) \in K^*, \qquad      \left\langle \nabla \varphi(x), x \right\rangle =0.
\]
\end{theorem}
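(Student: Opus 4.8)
The plan is to pass through the classical first-order variational characterization of a minimizer of a differentiable convex function over a convex set, and then to exploit the conic structure of $K$ to convert that characterization into the three stated conditions. Concretely, for a feasible point $\bar{x}\in K$ I would first establish that $\bar{x}$ solves the problem if and only if it satisfies the variational inequality
\[
\lng \nabla\varphi(\bar{x}),\, x-\bar{x}\rng \geq 0 \qquad \forall\, x\in K.
\]
This intermediate equivalence isolates the role of convexity and differentiability, after which only an elementary conic computation remains.

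For the ``only if'' direction of this intermediate step, I would fix $x\in K$ and use convexity of $K$ to note that $\bar{x}+t(x-\bar{x})=(1-t)\bar{x}+tx\in K$ for all $t\in[0,1]$; since $\bar{x}$ is a minimizer, the right-sided derivative of $t\mapsto\varphi(\bar{x}+t(x-\bar{x}))$ at $t=0^+$, which by differentiability equals $\lng\nabla\varphi(\bar{x}),x-\bar{x}\rng$, must be nonnegative. For the ``if'' direction I would invoke the gradient inequality for convex functions, $\varphi(x)\geq\varphi(\bar{x})+\lng\nabla\varphi(\bar{x}),x-\bar{x}\rng$, which combined with the variational inequality gives $\varphi(x)\geq\varphi(\bar{x})$ for every $x\in K$.

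The conic step is the core of the argument. Assuming the variational inequality together with $\bar{x}\in K$, I would rewrite it as $\lng\nabla\varphi(\bar{x}),x\rng\geq\lng\nabla\varphi(\bar{x}),\bar{x}\rng$ for all $x\in K$ and then substitute the special points $x=0\in K$ and $x=2\bar{x}\in K$ (both lie in $K$ because a cone contains $0$ and is closed under nonnegative scaling). The first choice yields $\lng\nabla\varphi(\bar{x}),\bar{x}\rng\leq 0$ and the second yields $\lng\nabla\varphi(\bar{x}),\bar{x}\rng\geq 0$, whence $\lng\nabla\varphi(\bar{x}),\bar{x}\rng=0$. Plugging this back leaves $\lng\nabla\varphi(\bar{x}),x\rng\geq 0$ for all $x\in K$, which is precisely $\nabla\varphi(\bar{x})\in K^*$ by the definition of the dual cone. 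Conversely, if the three conditions hold, then for any $x\in K$ one has $\lng\nabla\varphi(\bar{x}),x-\bar{x}\rng=\lng\nabla\varphi(\bar{x}),x\rng-0\geq 0$, recovering the variational inequality and closing the loop.

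I do not expect a serious obstacle, as every step is elementary; the only point requiring care is the justification of the first-order variational inequality, where one must keep the feasible segment $[\bar{x},x]$ inside $K$ (guaranteed by convexity) and correctly identify the one-sided directional derivative with $\lng\nabla\varphi(\bar{x}),x-\bar{x}\rng$ (guaranteed by differentiability). Since the paper already cites Proposition~2A.3 and Theorem~2A.6 of \cite{DontRockBook2009}, this intermediate equivalence may alternatively be quoted outright, leaving only the short conic manipulation to be written out in full.
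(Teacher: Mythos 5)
Your argument is correct: the variational-inequality characterization of a constrained minimizer followed by the substitutions $x=0$ and $x=2\bar{x}$ to extract the complementarity conditions is exactly the standard route, and both substituted points do lie in $K$ under the paper's definition of a cone. The paper itself gives no proof, deferring to Proposition~2A.3 and Theorem~2A.6 of Dontchev--Rockafellar, which encode precisely the two steps you carry out (the variational inequality over a convex set, then its reformulation over a cone as a complementarity condition), so your write-up simply fills in the details the paper delegates to that citation.
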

 We end this section with the basic contraction mapping principle, its proof can be found in of \cite{DontRockBook2009} (see Theorem 1A.3 page 15).
\begin{theorem}  [basic contraction mapping principle] \label{fixedpoint}
 Let ${\mathbb X}$ be a complete metric space with metric $\rho$ and let $\phi : {\mathbb X} \to  {\mathbb X}$.  Suppose that there exists $\lambda \in  [0,1)$ such that  $\rho(\phi(x), \phi(y)) \le \alpha \rho(x,y)$, for all $ x, y \in {\mathbb X}$. Then there exists an unique $x\in {\mathbb X}$  such that $\phi(x) = x$
\end{theorem}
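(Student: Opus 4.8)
The plan is to use the standard Picard iteration argument. First I would fix an arbitrary starting point $x_0 \in {\mathbb X}$ and define the sequence of iterates by $x_{k+1} = \phi(x_k)$ for $k = 0, 1, 2, \ldots$ (I use $\lambda$ for the contraction constant, matching the quantified variable in the statement). The whole proof then reduces to showing that this sequence is Cauchy, after which completeness and the continuity of $\phi$ finish the argument.

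The key estimate is to control consecutive iterates geometrically. Applying the contraction hypothesis repeatedly gives
\[
\rho(x_{k+1}, x_k) = \rho(\phi(x_k), \phi(x_{k-1})) \le \lambda\, \rho(x_k, x_{k-1}) \le \cdots \le \lambda^k \rho(x_1, x_0).
\]
Then, for any $m > k$, the triangle inequality combined with the geometric series bound yields
\[
\rho(x_m, x_k) \le \sum_{j=k}^{m-1} \rho(x_{j+1}, x_j) \le \lf( \sum_{j=k}^{m-1} \lambda^j \rg) \rho(x_1, x_0) \le \f{\lambda^k}{1-\lambda}\, \rho(x_1, x_0).
\]
Since $\lambda \in [0,1)$, the right-hand side tends to $0$ as $k \to \infty$ uniformly in $m$, so $\{x_k\}$ is Cauchy. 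This uniform geometric control is really the heart of the matter; everything else is routine.

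Having established that the sequence is Cauchy, completeness of ${\mathbb X}$ guarantees a limit $\bar{x} = \lim_{k\to\infty} x_k$. Since a contraction is Lipschitz continuous, I would pass to the limit in the recursion $x_{k+1} = \phi(x_k)$ to obtain $\bar{x} = \phi(\bar{x})$, so $\bar{x}$ is a fixed point. For uniqueness, if $\bar{x}$ and $\bar{y}$ both satisfy $\phi(\bar{x}) = \bar{x}$ and $\phi(\bar{y}) = \bar{y}$, then $\rho(\bar{x}, \bar{y}) = \rho(\phi(\bar{x}), \phi(\bar{y})) \le \lambda \rho(\bar{x}, \bar{y})$, and since $\lambda < 1$ this forces $\rho(\bar{x}, \bar{y}) = 0$, i.e. $\bar{x} = \bar{y}$.

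I do not anticipate a genuine obstacle, as this is the classical Banach argument. The only points requiring a little care are the geometric-series estimate that delivers the Cauchy property uniformly in $m$, and the repeated use of the fact that the contraction constant is \emph{strictly} less than $1$, which is exactly what is needed in both the convergence and the uniqueness steps.
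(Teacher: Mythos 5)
Your proof is correct and complete: the Picard iteration, the geometric-series Cauchy estimate, the passage to the limit via continuity of $\phi$, and the uniqueness argument are all sound, and the strictness of $\lambda<1$ is used exactly where it is needed. The paper itself offers no proof of this theorem (it only cites Theorem~1A.3 of the Dontchev--Rockafellar monograph), and your argument is precisely the classical one found there, so there is nothing to reconcile.
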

\section{Quadratic   programming  under a  simplicial cone constraint} \label{sec: defbp}
In this section we will present  a  semi-smooth Newton method for solving a special   class  of quadratic   programming problems, namely,  quadratic  programming  problems  under a  simplicial cone constraint.  The statement of such a problem is:
\begin{problem}[{quadratic programming  problem under a  simplicial cone constraint}]  \label{prob:qpscc}
Let $Q \in \R^{n\times n}$  be  a symmetric positive definite matrix,  $b \in \R^n$, $ c \in \R$ and  $A  \in \R^{n\times n}$ a nonsingular
matrix.  Find  a  solution $u$ of the convex programming problem
\begin{align}  
 & \min   ~ \frac{1}{2}x^\top Qx+x^\top b +c \\  \notag
 & \const  x \in A\R^n_+.
\end{align}
\end{problem}
Let  us  present  an important particular case of Problem~\ref{prob:qpscc}. 
\begin{example}
 Given  $A \in \R^{n\times n}$    a nonsingular matrix  and  $z\in \R^n$.   The {\it projection $P_{A\R^n_+}(z)$ of the point $z$ onto the cone  \( A\R^n_+ \)} is defined by 
\[
P_{A\R^n_+}(z):=\mbox{argmin} \left\{ \frac{1}{2}\|z-y\|^2~:~ y\in A\R^n_+\right\}.
\]
From the definition of the simplical cone associated with  the  matrix   $A$ in \eqref{d:sc}, the problem of   projecting the point $z\in \R^n$
onto a  simplicial cone   $A\R^n_+$ may be  stated  as the following  positively constrained  quadratic programming problem 
\begin{align*}
 & \min  ~ \frac{1}{2}\|z-Ax\|^2, \\ 
 & \const  x \in \R^n_+.
\end{align*}
Hence, if $v\in \R^n$ is the   unique  solution of this problem then  we have   $P_{A\R^n_+}(z)=Av$.  The above problem is equivalent to the
following   nonegatively constrained  quadratic programming problem
\begin{align}  \label{eq:nap}
 & \min  ~ \frac{1}{2}x^\top \tilde{Q}x+x^\top \tilde{b} +\tilde{c} \\  \notag
 & \const  x \in \R^n_+, 
\end{align}
by taking $\tilde{Q}=A^\top A$,  $\tilde{b}=-A^\top z$ and $\tilde{c}=z^\top z /2$.  The  optimality condition for problem \eqref{eq:nap}  implies  that its solution  can be obtained by solving the following linear complementarity problem 
\begin{equation} \label{eq:lcp}
y - \tilde{Q}x= \tilde{b}, \qquad x \ge 0, \qquad    y \ge0, \qquad    \langle x, y \rangle=0.
\end{equation}
It is  easy to establish  that corresponding to each  nonnegative quadratic problems  \eqref{eq:nap} and each  linear complementarity problems \eqref{eq:lcp} associated to  symmetric positive definite matrices, there are equivalent  problems of   projection onto  simplicial cones.   Therefore, the problem of projecting onto  simplicial cones can be solved by  active set
methods \cite{Bazaraa2006,LiuFathi2011,LiuFathi2012,Murty1988} or any algorithms for solving LCPs,  see e.g  \cite{Bazaraa2006,Murty1988} and
special methods based on its geometry,   see e.g \cite{MurtyFathi1982,Murty1988}. Other fashionable ways to solve this problem are based on
the classical von Neumann algorithm (see e.g. the Dykstra algorithm \cite{DeutschHundal1994,Dykstra83,Shusheng2000}). Nevertheless, these 
methods are also quite expensive (see the numerical results in \cite{Morillas2005} and the remark preceding section 6.3 in  \cite{MingGuo-LiangHong-BinKaiWang2007}). 
\end{example}
In the next section we will show that Problem~\ref{prob:qpscc}  can be solved by finding a solution of a special semi-smooth equation.
\subsection{The semi-smooth equation associated to  quadratic  programming} \label{sec:seaqp}
In this section we present a semi-smooth equation whose  any solution  generates a solution of Problem~\ref{prob:qpscc}.  
\begin{problem}[{\bf semi-smooth equation}]  \label{prob:ne}
Let $Q \in \R^{n\times n}$  be  a symmetric positive definite matrix,  $b \in \R^n$, $ c \in \R$ and  $A \in \R^{n\times n}$ a nonsingular matrix. Find  a  solution $u$ of the semi-smmoth  equation
\begin{equation}\label{equation}
		\left[A^\top QA-I\right]x^+ +x+A^\top b=0.
\end{equation} 
\end{problem}
Next we  apply   Theorem~\ref{th:pol}  for showing  that   a solution of    Problem~\ref{prob:ne}  generates a solution of Problem~\ref{prob:qpscc}.
\begin{proposition}  \label{pr:polscq}
If the vector \(u\)  is a solution of Problem~\ref{prob:ne}, then  \( Au^+\)   is a solution of Problem~\ref{prob:qpscc}.
\end{proposition}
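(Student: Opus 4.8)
The plan is to show that $\bar{x} := Au^+$ satisfies the optimality conditions of Theorem~\ref{th:pol} for the objective $\varphi(x) := \frac{1}{2}x^\top Qx + x^\top b + c$ over the cone $K := A\R^n_+$. First I would record the ingredients: $\varphi$ is differentiable and, since $Q$ is symmetric positive definite, convex, with $\nabla\varphi(x) = Qx + b$; the set $K$ is a closed convex cone; and by Lemma~\ref{le:lad} its dual cone is $K^* = (A^\top)^{-1}\R^n_+$. The primal feasibility condition $\bar{x} \in K$ is then immediate, because $u^+ \in \R^n_+$ forces $Au^+ \in A\R^n_+$.

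The key step is to rearrange the semi-smooth equation~\eqref{equation}. Expanding the bracket gives $A^\top QA\,u^+ - u^+ + u + A^\top b = 0$, and substituting $u = u^+ - u^-$ from~\eqref{eq;drp} produces the single identity
$$A^\top\!\left(QAu^+ + b\right) = A^\top QA\,u^+ + A^\top b = u^+ - u = u^-,$$
that is, $A^\top \nabla\varphi(\bar{x}) = u^-$. From this both remaining conditions drop out. Since $u^- \in \R^n_+$ by~\eqref{eq;drp}, we obtain $\nabla\varphi(\bar{x}) = (A^\top)^{-1}u^- \in (A^\top)^{-1}\R^n_+ = K^*$, which is dual feasibility. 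For complementarity I would compute
$$\langle \nabla\varphi(\bar{x}), \bar{x}\rangle = \langle QAu^+ + b,\, Au^+\rangle = \langle A^\top(QAu^+ + b),\, u^+\rangle = \langle u^-, u^+\rangle = 0,$$
the final equality being the orthogonality $\langle u^+, u^-\rangle = 0$ of~\eqref{eq;drp}. With all three conditions in hand, Theorem~\ref{th:pol} yields that $\bar{x} = Au^+$ solves Problem~\ref{prob:qpscc}.

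I do not expect a real obstacle: once the equation is rewritten, the argument is pure bookkeeping. The only point demanding care is the decomposition $u = u^+ - u^-$ together with the orthogonality and nonnegativity relations in~\eqref{eq;drp}; one must keep straight that $u^+$ supplies the primal point $Au^+$ while its complementary part $u^-$ turns out to be exactly the transformed gradient $A^\top\nabla\varphi(Au^+)$, so that both dual feasibility and complementarity are inherited directly from~\eqref{eq;drp}.
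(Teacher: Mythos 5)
Your proposal is correct and follows essentially the same route as the paper: rewrite the semi-smooth equation as $A^\top(QAu^+ + b) = u^-$ using the decomposition $u = u^+ - u^-$ from \eqref{eq;drp}, then verify primal feasibility, dual feasibility via Lemma~\ref{le:lad}, and complementarity via $\langle u^+, u^-\rangle = 0$, before invoking Theorem~\ref{th:pol}. No gaps.
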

\begin{proof}
Note that  from \eqref{eq;drp} we have $u^+-u=u^-$ for all \(u\in
\R^n\). Thus,  if \(u\in \R^n\) is a solution of Problem~\ref{prob:ne}, then
\[
  A^\top \left(QAu^+ + b\right)=u^-. 
\]
Since $A$ is a nonsingular matrix and  \(u^-\in \mathbb{R}^n_+\),  it follows from  the last equality that 
\[
QAu^+ + b =(A^\top)^{-1}u^-\in (A^\top)^{-1}\mathbb{R}^n_+.
 \]
Hence,  by using Lemma~\ref{le:lad}  and  $\left\langle   u^-,  u^+ \right\rangle=0$, the last inclusion easily  implies that 
\[
QAu^+ + b  \in (A\mathbb{R}^n_+)^*, \qquad \langle  QAu^+ + b ,  Au^+ \rangle =  0. 
\]
Therefore,  as \(Au^+ \in A\mathbb{R}^n_+$,  applying  Theorem~\ref{th:pol} with   $K=A\mathbb{R}^n_+$ and \(\varphi(x) = x^\top Qx/2+x^\top b +c\) ,  the desired result follows.
\end{proof} 
\begin{proposition}\label{pr:uniqq}  
Let \( \lambda \in \mathbb{R}\).       If \(\left\|A^\top Q A-I\right\|\leq \lambda<1 \)   then  Problem~\ref{prob:ne}  has a unique solution. 
\end{proposition}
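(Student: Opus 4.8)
The plan is to recast the semi-smooth equation \eqref{equation} as a fixed-point equation and then invoke the contraction mapping principle, Theorem~\ref{fixedpoint}. Write $M:=A^\top QA-I$, so that Problem~\ref{prob:ne} asks for $x\in\R^n$ with $Mx^+ + x + A^\top b=0$. Solving this for the unconstrained $x$ term, I would define the map $\phi:\R^n\to\R^n$ by
\[
\phi(x):=-Mx^+ - A^\top b,
\]
and observe that $x$ solves Problem~\ref{prob:ne} if and only if $x=\phi(x)$. Thus existence and uniqueness of a solution are equivalent to existence and uniqueness of a fixed point of $\phi$.

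The main step is then to verify that $\phi$ is a contraction on the complete metric space $\R^n$ (with the metric induced by $\|\cdot\|$). For arbitrary $x,y\in\R^n$ I would estimate
\[
\|\phi(x)-\phi(y)\| = \|M(x^+-y^+)\| \le \|M\|\,\|x^+-y^+\| \le \|M\|\,\|x-y\|,
\]
where the first inequality uses \eqref{eq:np} and the second uses the nonexpansiveness of the projection onto $\R^n_+$, i.e.\ $\|x^+-y^+\|\le\|x-y\|$, which is exactly the content of Remark~\ref{re:ppo}. By the hypothesis $\|M\|=\|A^\top QA-I\|\le\lambda<1$, this shows $\phi$ is Lipschitz with constant $\lambda\in[0,1)$.

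With the contraction property in hand, Theorem~\ref{fixedpoint} applied to $\phi$ on $\mathbb{X}=\R^n$ yields a unique $u\in\R^n$ with $\phi(u)=u$, which is precisely the unique solution of Problem~\ref{prob:ne}. There is no genuine obstacle here: the only point requiring care is the nonexpansiveness estimate $\|x^+-y^+\|\le\|x-y\|$, and this is already supplied by Remark~\ref{re:ppo} since $x\mapsto x^+$ is the metric projection onto the convex set $\R^n_+$. I would remark that the term $A^\top b$ is a constant and therefore plays no role in the Lipschitz constant, so the entire argument hinges on controlling the norm of $M=A^\top QA-I$, which is exactly the quantity the hypothesis bounds below $1$.
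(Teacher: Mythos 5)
Your proof is correct, and it follows the same overall strategy as the paper: rewrite Problem~\ref{prob:ne} as the fixed-point equation $x=\phi(x)$ with $\phi(x)=-\left[A^\top QA-I\right]x^+-A^\top b$, show $\phi$ is a $\lambda$-contraction, and invoke Theorem~\ref{fixedpoint}. The difference lies entirely in how the Lipschitz estimate is obtained. The paper writes the difference $\phi(x)-\phi(y)$ as an integral of $-\left[A^\top QA-I\right]\diag\left(\sgn\left((y+t(x-y))^+\right)\right)(x-y)$ over $t\in[0,1]$ and bounds the integrand using $\left\|\diag\left(\sgn\left(z^+\right)\right)\right\|\le 1$; this mimics the mean-value argument one would use for a smooth map, and strictly speaking it requires a word of justification since $z\mapsto z^+$ is only piecewise linear (the representation holds because the map is Lipschitz along the segment, hence absolutely continuous and differentiable almost everywhere, but the paper does not say this). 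Your route avoids the issue entirely: you write $\phi(x)-\phi(y)=-\left[A^\top QA-I\right](x^+-y^+)$ exactly, and then use the nonexpansiveness $\|x^+-y^+\|\le\|x-y\|$ from Remark~\ref{re:ppo}. This is more elementary, shorter, and arguably more rigorous than the printed argument, at the price of relying on the projection interpretation of $x\mapsto x^+$; the paper's integral formulation has the minor advantage of foreshadowing the subgradient $S(x)$ in \eqref{eq:subgrad} that drives the Newton analysis in Section~\ref{sec:ssnm2}. Either way the contraction constant is $\lambda$, and the application of Theorem~\ref{fixedpoint} is identical.
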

  \begin{proof}  The Problem~\ref{prob:ne}  has a solution  if only if the function 
 \(
 \phi (x)=-\left[A^\top QA-I\right]x^+ -A^\top b, 
 \)
 has a fixed point.  From  the definition   of the function \( \phi\) and \eqref{eq:subgrad}, it follows that for all \(x, y \in \mathbb{R}^n\) we have 
\[
 \phi(x)- \phi(y)=\int_{0}^{1}  -\left[A^\top Q A-I\right]\mbox{diag}\left(\mbox{sgn}((y+t(x-y))^+)\right)(x-y) \mbox{d}t.
\]
Since \( \|\mbox{diag}(\mbox{sgn}((y+t(x-y))^+))\|<1\) and   \(\left\|A^\top Q A-I\right\|<\lambda<1 \) for all \(t\in [0,1]\),  the last equality implies that 
\[
\| \phi(x)- \phi(y)\|\leq \lambda \|x-y\|, \qquad \forall ~x, y \in \mathbb{R}^n.
\]
Hence \( \phi\) is a contraction. Therefore  applying Theorem~\ref{fixedpoint} with ${\mathbb X}= \mathbb{R}^n$ and \(\rho=\| . \|\) we conclude that \(\phi \) has  precisely a unique fixed point and consequently Problem~\ref{prob:ne}    has precisely a unique solution.
\end{proof}
\subsection{Semi-smooth Newton method} \label{sec:ssnm2}
In this section our goal is to state and prove a convergence  theorem on the semi-smooth Newton method for  finding the solution of
Problem~\ref{prob:ne}. We will first prove the well-definedness of the sequence generated by the semi-smooth Newton method. Then, under suitable
conditions,  the $Q$-linear convergence   will be established. We also give a condition for the Newton method to finish in a finite number of 
iterations.  Finally, we show that the semi-smooth sequence generated by the Newton method  is bounded and we give a formula for any accumulation 
point of it. The statement of the main theorem is:
\begin{theorem}\label{th:mrq}
Let $Q \in \R^{n\times n}$  be  a symmetric positive definite matrix,  $b \in \R^n$, $ c \in \R$ and  $A \in \R^{n\times n}$ a nonsingular
matrix.    Then, the sequences \(\{x_k\}\)  generated by the semi-smooth Newton Method for solving   Problem~\ref{prob:ne}, 
 \begin{equation} \label{eq:newtonc2}
	 x_{k+1}=-\left ( \left[A^\top Q A-I\right]\mbox{diag}(\mbox{sgn}(x_k^+)) +I\right )^{-1} A^\top b , 
\end{equation}
for \(k=0,1,\ldots \), is well defined  for  any  starting  point   \(x_0\in \mathbb{R}^n\).  Moreover, if 
\begin{equation} \label{eq:CC2}
\left\|A^\top Q A-I\right\|<1/2, \qquad   \forall ~ x\in \mathbb{R}^n, 
\end{equation}
 then  the sequence $ \{x_k\}$   converges $Q$-linearly  to  \(u\in \mathbb{R}^n\),  the unique solution  of  Problem~\ref{prob:ne}, as follows 
  \begin{equation} \label{eq:lconv3}
\|u-x_{k+1}\|\leq \frac{\|A^\top QA -I\|}{1- \|A^\top Q A -I\|} \|u-x_{k}\|,  \qquad k=0, 1, \ldots ,
\end{equation}
As a consequence,  \( Au^+\)   is the solution of the  Problem~\ref{prob:qpscc}.
\end{theorem}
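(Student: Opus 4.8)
The plan is to proceed in three stages: first derive the closed form \eqref{eq:newtonc2} and settle well-definedness with no norm hypothesis, then establish the contraction estimate \eqref{eq:lconv3}, and finally invoke Proposition~\ref{pr:polscq}. Throughout I write $M:=A^\top QA-I$ and, at a point $x_k$, set $D_k:=\diag(\sgn(x_k^+))$, a diagonal matrix with entries in $\{0,1\}$. The semi-smooth Newton step for $F(x):=Mx^++x+A^\top b$ uses the generalized Jacobian $J_k:=MD_k+I$, so that $x_{k+1}=x_k-J_k^{-1}F(x_k)$. Since $D_kx_k=x_k^+$, I would observe that $J_kx_k=Mx_k^++x_k$, hence $F(x_k)=J_kx_k+A^\top b$, and substituting collapses the step to $x_{k+1}=-J_k^{-1}A^\top b$, which is exactly \eqref{eq:newtonc2}.

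Well-definedness then reduces to the invertibility of $J_k$ for every admissible $0$–$1$ diagonal $D_k$, and here I would avoid any norm assumption. Reordering the coordinates so that the indices where $D_k=1$ come first, the matrix $J_k=(A^\top QA)D_k+(I-D_k)$ becomes block lower triangular, with diagonal blocks equal to the corresponding principal submatrix of $A^\top QA$ and an identity block. Because $A^\top QA$ is symmetric positive definite (as $Q$ is and $A$ is nonsingular), each of its principal submatrices is positive definite, so $\det J_k>0$ and $J_k$ is invertible for any $x_k$; this gives the first assertion for an arbitrary starting point.

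For the estimate I would assume \eqref{eq:CC2}, so $\|M\|<1/2<1$; by Proposition~\ref{pr:uniqq} the solution $u$ exists, is unique, and satisfies $A^\top b=-Mu^+-u$. The core computation is direct: from $x_{k+1}=-J_k^{-1}A^\top b$ one gets $J_kx_{k+1}=Mu^++u$, while $J_ku=MD_ku+u$, and subtracting yields the key identity
\begin{equation*}
u-x_{k+1}=J_k^{-1}M\bigl(D_ku-u^+\bigr).
\end{equation*}
The heart of the matter is the pointwise bound $\lvert(D_ku-u^+)_i\rvert\le\lvert(u-x_k)_i\rvert$ for each $i$: the left-hand side is nonzero only when the $i$-th coordinates of $x_k$ and $u$ lie on opposite sides of $0$ (one positive, the other nonpositive), and in exactly those cases $\lvert u_i\rvert\le\lvert x_{k,i}-u_i\rvert$, so that $\|D_ku-u^+\|\le\|u-x_k\|$. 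Bounding the middle factor by $\|M\|$ and using $\|J_k^{-1}\|\le 1/(1-\|MD_k\|)\le 1/(1-\|M\|)$ — which follows from Banach's Lemma~\ref{lem:ban} applied to $J_k=I-(-MD_k)$, since $\|D_k\|\le 1$ forces $\|MD_k\|\le\|M\|<1$ — delivers precisely \eqref{eq:lconv3}.

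Finally, since $\|M\|<1/2$ is equivalent to $\|M\|/(1-\|M\|)<1$, estimate \eqref{eq:lconv3} shows $\{x_k\}$ converges $Q$-linearly to $u$, and as $u$ solves Problem~\ref{prob:ne}, Proposition~\ref{pr:polscq} gives that $Au^+$ solves Problem~\ref{prob:qpscc}. I expect the only genuinely delicate point to be the coordinatewise sign estimate on $D_ku-u^+$, where the mismatch between the active set $\sgn(x_k^+)$ driving $D_k$ and the true active set defining $u^+$ must be absorbed into $\|u-x_k\|$; the derivation of \eqref{eq:newtonc2}, the block-triangular invertibility argument, and the application of Banach's Lemma are essentially bookkeeping.
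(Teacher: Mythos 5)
Your proof is correct, and while its overall skeleton (closed-form derivation of \eqref{eq:newtonc2}, invertibility of the generalized Jacobian, a bound on $\|S(x_k)^{-1}\|$ via Banach's Lemma, uniqueness from Proposition~\ref{pr:uniqq}, and Proposition~\ref{pr:polscq} for the last claim) matches the paper's, the two technical pillars are established by genuinely different arguments. For well-definedness, the paper's Lemma~\ref{nonsingGC} argues by contradiction, factoring $Q=LL^\top$ and exploiting $D^2=D$ to force $L^\top Au=0$; you instead permute coordinates so that $S(x)=A^\top QAD+(I-D)$ becomes block lower triangular with diagonal blocks a principal submatrix of $A^\top QA$ and an identity block, and invoke positive definiteness of principal submatrices. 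Your route is more transparent and yields the extra information $\det S(x)>0$. For the error estimate, the paper bounds the linearization residual $F(u)-F(x_k)-S(x_k)(u-x_k)$ through the integral mean-value representation and the bound $\|S(x)-S(y)\|\le\|A^\top QA-I\|$ (Lemmas~\ref{nonsingq} and~\ref{eq:ftcq}); you compute that residual exactly --- note that $F(u)-F(x_k)-S(x_k)(u-x_k)=\left[A^\top QA-I\right](u^+-D_ku)$, so your key identity $u-x_{k+1}=J_k^{-1}\left[A^\top QA-I\right](D_ku-u^+)$ is literally the same quantity the paper estimates --- and then bound $\|D_ku-u^+\|\le\|u-x_k\|$ by an elementary coordinatewise case analysis (the difference is nonzero only in coordinates where $x_{k,i}$ and $u_i$ straddle zero, where $|u_i|\le|u_i-x_{k,i}|$). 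Your case analysis is correct in all four cases and avoids the integral representation entirely, at the modest cost of a sign bookkeeping argument; it even gives a componentwise refinement of the paper's bound. Both approaches deliver exactly the constant $\|A^\top QA-I\|/(1-\|A^\top QA-I\|)$ in \eqref{eq:lconv3}, and your concluding steps under \eqref{eq:CC2} agree with the paper's.
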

Henceforward  we assume that all assumptions in Theorem~\ref{th:mrq} hold.  The {\it semi-smooth Newton method}, see \cite{LiSun93},  for solving the  Problem~\ref{prob:ne}, i.e.,  for  finding the zero of the function 
\begin{equation} \label{eq:fuc}
F(x):=\left[A^\top QA-I\right]x^+ +x+A^\top b. 
\end{equation}
with starting point   \(x_0\in \mathbb{R}^n\),  is formally  defined by 
\begin{equation} \label{eq:nmqc}
F(x_k)+ V_k\left(x_{k+1}-x_{k}\right)=0,  \qquad   V_k \in \partial F(x_k), \qquad k=0,1,\ldots,
\end{equation}
where  \( V_k\) is any subgradient in  \( \partial F(x_k)\)  the  Clarke generalized Jacobian of \(F\) at \(x_k\).  Note that   \(S(x)\),   a subgradien  in the    Clarke generalized Jacobian of the function $F$ at $x$  (see   Definition~ 2.6.1 on page 70 of  \cite{Clarke1990}),  is given by
\begin{equation} \label{eq:subgrad}
S(x):=\left[A^\top QA-I\right]\mbox{diag}(\mbox{sgn}(x^+)) +I\in  \partial F(x), \qquad x\in  \R^n. 
\end{equation} 
Since  
\(
\mbox{diag}(\mbox{sgn}(x^+))x=x^+ 
\)
for all \(x\in \R^n\),  taking into account \eqref{eq:fuc} and \eqref{eq:subgrad}, we conclude that $S(x)x=F(x)- A^\top b$.  Thus,      taking  $V_k=S(x_k)$,    equation  \eqref{eq:nmqc}   becomes   
\begin{equation}\label{eq:mss}
	S(x_k)x_{k+1}= -A^\top b,   \qquad k=0, 1,  \ldots , 
\end{equation}
which is an equivalente definition of semi-smooth Newton sequence for solving the semi-smooth Problem~\ref{prob:ne}, i.e., equation \eqref{eq:newtonc2},  which   formally  defines  a sequence  \(\{x_k\}\) with starting point \(x_0\in \R^n \). Hence,  the sequence  \(\{x_k\}\)  defined in   \eqref{eq:newtonc2}  will be called  {\it semi-smooth Newton sequence} for  finding the zero of the function \(F\)  defined in \eqref{eq:fuc},  or equivalently  for solving the semi-smooth Problem~\ref{prob:ne}.
\begin{lemma}\label{nonsingGC}
The matrix \(S(x)\)   defined in  \eqref{eq:subgrad}  is nonsingular for all \(x\in \mathbb{R}^n\). As a consequence,  the  semi-smooth Newton sequence \(\{x_k\}\)    is well-defined,  for any starting point \(x_0 \in \R^{n}\).
\end{lemma}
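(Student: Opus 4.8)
The plan is to prove nonsingularity directly, by showing that the homogeneous system $S(x)v=0$ forces $v=0$, and then to obtain well-definedness of the Newton iterates as an immediate consequence of \eqref{eq:mss}. Throughout I would set $M:=A^\top QA$ and $D:=\diag(\sgn(x^+))$, so that $S(x)=(M-I)D+I$. Two structural facts drive the argument. First, $M$ is symmetric and positive definite: since $Q$ is positive definite and $A$ is nonsingular, $\langle v,Mv\rangle=\langle Av,Q(Av)\rangle>0$ for every $v\neq 0$. Second, $D$ is a diagonal $0$--$1$ matrix, its $i$-th diagonal entry being $1$ when $x_i>0$ and $0$ otherwise; in particular $D$ is idempotent, $D^2=D$.

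First I would take any $v$ with $S(x)v=0$ and set $w:=Dv$. Rearranging $(M-I)Dv+v=0$ gives $v=(I-M)w$. Applying $D$ to both sides and invoking $D^2=D$ (so that $Dw=w$), I would extract the two key relations $Dw=w$ and $DMw=0$. The first says that $w$ is supported on the index set $J=\{\,i:x_i>0\,\}$ on which $D$ acts as the identity; the second says that the entries of $Mw$ indexed by $J$ all vanish.

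The crux, and the step I expect to be the main obstacle, is that $S(x)$ is neither symmetric nor obviously positive definite, since $M-I$ carries no sign information; so I must convert these two relations into something on which positive definiteness can bite. Here I would compute $\langle w,Mw\rangle=\sum_i w_i(Mw)_i$ and observe that every term vanishes: for $i\in J$ the factor $(Mw)_i$ is zero by $DMw=0$, while for $i\notin J$ the factor $w_i$ is zero by $Dw=w$. Hence $\langle w,Mw\rangle=0$, and positive definiteness of $M$ forces $w=0$, whence $v=(I-M)w=0$. Thus $S(x)$ is injective, and being a square matrix it is nonsingular, for every $x\in\R^n$.

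Finally, for the consequence, I would note that nonsingularity of $S(x_k)$ makes the linear equation $S(x_k)x_{k+1}=-A^\top b$ in \eqref{eq:mss} uniquely solvable, yielding $x_{k+1}=-S(x_k)^{-1}A^\top b$, which is precisely the iteration \eqref{eq:newtonc2}. A routine induction on $k$ then shows the semi-smooth Newton sequence $\{x_k\}$ is well-defined for any starting point $x_0\in\R^n$.
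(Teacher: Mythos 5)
Your proof is correct and follows essentially the same route as the paper's: both arguments reduce to showing that a kernel vector of $S(x)$ must vanish by combining the idempotency $D^2=D$ with the positive definiteness of $Q$ (equivalently of $A^\top QA$). The only cosmetic difference is that you annihilate the quadratic form $\langle w, A^\top QAw\rangle$ directly via the support relations $Dw=w$ and $DA^\top QAw=0$, whereas the paper factors $Q=LL^\top$ and exhibits the same quantity as $\|L^\top ADu\|^2=\langle (D^2-D)u,u\rangle=0$; the conclusion and the treatment of well-definedness are identical.
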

\begin{proof}
 Let \(x\in \mathbb{R}^n\). To simplify the notations let  \(D=\mbox{diag}(\mbox{sgn}(x^+))\).     Thus, the matrix in $S(x)$ becomes
\[
 \left[A^\top Q A-I\right]D +I. 
\]
 Let us suppose, by contradiction, that this matrix  is singular, i.e, there exists \(u\in \mathbb{R}^n\) such that 
\[
\left(\left[A^\top Q A-I\right]D +I\right)u = 0,  \qquad  u \neq 0.
\]
It is straightforward to see that the last formula is equivalent to
\begin{equation} \label{eq:nsq}
A^\top Q ADu = (D - I)u,  \qquad  u \neq 0.
\end{equation}
Since the  matrix \(Q\) is symmetric and positive definite,  there exists  a  nonsingular matrix \(L \in \R^{n\times n}\) such that \(Q=LL^\top\). Taking into account  that   $D^2 = D$ and \(Q=LL^\top\), the  equality in equation \eqref{eq:nsq}  easily implies that 
\[
\left\| L^\top ADu\right\|^2 = \left\langle DA^\top Q ADu, u  \right\rangle=   \left\langle   (D^2-D)u, u\right\rangle  = 0. 
\]
Thus we have  \( L^\top ADu = 0\).  As \(Q=LL^\top\) and  \( L^\top ADu = 0\),  equation  \eqref{eq:nsq} implies  that \((D -I)u = 0\), or equivalently, \(Du=u\). Hence 
\[
 L^\top Au =  L^\top ADu = 0,   \qquad  u \neq 0.
\]
But this contradicts the nonsingularity of \(A\), since $L$ is nonsingular. Therefore, the matrix \(S(x)\) is nonsingular for all \(x\in \mathbb{R}^n\)  and the first part of the lemma is proven.
	
The proof of the second part of the lemma is an immediate consequence of the definition of the semi-smooth Newton sequence \(\{x_k\}\) in
\eqref{eq:newtonc2}, the definition of \(S(x)\) in \eqref{eq:subgrad}, and the first part of the lemma.
\end{proof}
\begin{lemma}\label{nonsingq}
 Let  \(S(x)\) be   as defined in  \eqref{eq:subgrad}.  If \(\left\|A^\top QA-I\right\|<1 \)  for all \(x \in \mathbb{R}^n\) then   
\[
\| S(x)^{-1}\| \leq \frac{1}{1- \left\|A^\top Q A -I\right\|}, \qquad  \forall ~x \in \mathbb{R}^n.
\]
\end{lemma}
\begin{proof}  To simplify the notation take $S(x)= -(E-I)$, where the matrix $E$ is defined by 
\[
E= \left[I-A^\top Q A \right]\mbox{diag}(\mbox{sgn}(x^+)).
\]
Since the diagonal matrix  $\mbox{diag}(\mbox{sgn}(x^+))$  has   components equal to $1$ or $0$, the definition of $E$ and the assumption 
\( \|A^\top Q A -I\| <1 \)  implies that  
\[
\|E\|\leq \|A^\top Q A -I\| <1.
\]
Therefore,  as $S(x)= -(E-I)$, combining the last inequality with Lemma~\ref{lem:ban} and the definition of $E$,  the desired inequality follows. 
\end{proof}
\begin{lemma}\label{eq:ftcq}
Let  \(F\) be the  function defined in \eqref{eq:fuc} and  \(S(x)\) be the matrix  defined in \eqref{eq:subgrad}. Then the following
inequality holds:
\[
\left\| S(x) -S(y)\right \|  \leq  \|A^\top Q A -I\|, \qquad  \forall ~x, y \in \mathbb{R}^n.
\]
As a consequence, 
\[
 \left\|F(x)-F(y)-S(y)(x-y)\right\| \leq   \|A^\top Q A -I\| \|x-y\|, \qquad  \forall ~x, y \in \mathbb{R}^n.
\]
\end{lemma}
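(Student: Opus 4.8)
The plan is to prove the two displayed inequalities in turn, obtaining the second as a corollary of the first.

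First I would establish the norm bound on $S(x)-S(y)$. Subtracting the definitions in \eqref{eq:subgrad}, the identity blocks cancel and
\[
S(x)-S(y)=\left[A^\top QA-I\right]\left(\mbox{diag}(\mbox{sgn}(x^+))-\mbox{diag}(\mbox{sgn}(y^+))\right).
\]
The decisive observation is purely structural: each matrix $\mbox{diag}(\mbox{sgn}(x^+))$ is diagonal with entries in $\{0,1\}$, so their difference is diagonal with entries in $\{-1,0,1\}$ and therefore has operator norm at most $1$. Applying the submultiplicativity \eqref{eq:np} then gives $\|S(x)-S(y)\|\le\|A^\top QA-I\|$, which is the first inequality.

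For the consequence I would reuse the Newton--Leibniz representation of the semismooth map $x\mapsto x^+$ already employed in the proof of Proposition~\ref{pr:uniqq}, namely
\[
x^+-y^+=\int_{0}^{1}\mbox{diag}\left(\mbox{sgn}\left((y+t(x-y))^+\right)\right)(x-y)\,\mbox{d}t .
\]
Using the definitions \eqref{eq:fuc} and \eqref{eq:subgrad}, the linear terms $x-y$ in $F(x)-F(y)$ cancel against the identity block of $S(y)$, and after pulling the constant term $\mbox{diag}(\mbox{sgn}(y^+))(x-y)$ under the integral one is left with
\[
F(x)-F(y)-S(y)(x-y)=\int_{0}^{1}\bigl(S(y+t(x-y))-S(y)\bigr)(x-y)\,\mbox{d}t .
\]
Taking norms inside the integral, estimating each integrand by $\|S(y+t(x-y))-S(y)\|\,\|x-y\|$, and invoking the first inequality to bound $\|S(y+t(x-y))-S(y)\|\le\|A^\top QA-I\|$ uniformly in $t\in[0,1]$, the second inequality follows.

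The only genuinely delicate point is the integral (Newton--Leibniz) representation of $x^+-y^+$, which rests on the fact that $x\mapsto x^+$ is Lipschitz with almost-everywhere Jacobian $\mbox{diag}(\mbox{sgn}(x^+))$; since this is exactly the representation already used in the proof of Proposition~\ref{pr:uniqq}, I would simply appeal to it rather than re-derive it. Everything else reduces to cancellation of the identity blocks and submultiplicativity of the induced matrix norm. If one wished to avoid the integral altogether, the consequence can instead be obtained by an elementary componentwise case analysis establishing $\left|(x_i^+-y_i^+)-\mbox{sgn}(y_i^+)(x_i-y_i)\right|\le|x_i-y_i|$ in each sign configuration of $x_i,y_i$, which yields $\|(x^+-y^+)-\mbox{diag}(\mbox{sgn}(y^+))(x-y)\|\le\|x-y\|$ and then the same submultiplicativity step with the factor $\|A^\top QA-I\|$.
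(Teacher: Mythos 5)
Your proposal is correct and follows essentially the same route as the paper: the first inequality via the factorization $S(x)-S(y)=[A^\top QA-I](\operatorname{diag}(\operatorname{sgn}(x^+))-\operatorname{diag}(\operatorname{sgn}(y^+)))$ together with the unit-norm bound on the diagonal difference and submultiplicativity, and the second via the integral representation $F(x)-F(y)-S(y)(x-y)=\int_0^1\bigl(S(y+t(x-y))-S(y)\bigr)(x-y)\,\mathrm{d}t$. The extra componentwise alternative you sketch is a valid elementary substitute but is not needed.
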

\begin{proof}
Let $x, y \in \mathbb{R}^n$.  The definition     in  \eqref{eq:subgrad}  implies that 
\[
 \left\|S(x) -S(y)\right\|=  \|A^\top Q A -I\| \left \|\mbox{diag}(\mbox{sgn}(x^+)) -  \mbox{diag}(\mbox{sgn}(y^+)) \right\| \leq  \|A^\top Q A -I\|, 
\]
which is the first inequality of the lemma.  For proving the second  inequality of the lemma,   note that     the definitions in \eqref{eq:fuc} and \eqref{eq:subgrad} imply 
\[
F(x)-F(y)-S(y)(x-y)=\int_{0}^{1}  \left[ S(y+t(x-y))-S(y)\right] (x-y) \mbox{d}t.
\]
Therefore, the result follows by taking the norm in both sides of the last equality and using  the first part of the lemma.  
\end{proof}

Finally, we are ready to prove  the  main result, namely, Theorem~\ref{th:mrq}. 
\medskip

\begin{proof3} The well-definedeness,  for any starting point \(x_0 \in \R^{n}\),  follows from Lemma~\ref{nonsingGC}.  Using  Proposition~\ref{pr:uniqq},  we conclude that under assumption \eqref{eq:CC2} Problem~\ref{prob:ne} has unique solution \(u\in  \R^{n}\).

Let  \(F\) be the  function defined in \eqref{eq:fuc} and  \(S(x)\) be the matrix  defined in \eqref{eq:subgrad}. Since \(u\in  \R^{n}\) is the solution of Problem~\ref{prob:ne}  we have \( F(u)=0\), which together  with definition of   $\{x_k\}$ in \eqref{eq:newtonc2} implies 
$$
u-x_{k+1}= -S(x_k)^{-1}\left[F(u) - F(x_k)- S(x_k)(u-x_{k})\right],  \qquad k=0, 1, \ldots .
$$
Using properties of the norm  in \eqref{eq:np},   last equality implies 
$$
\|u-x_{k+1}\|\leq  \|S(x_k)^{-1}\|  \left\|\left[F(u)- F(x_k)- S(x_k)(u-x_{k})\right]\right\|,  \qquad k=0, 1, \ldots .
$$
Combining Lemma~\ref{nonsingq} with the second part of Lemma~\ref{eq:ftcq}, we conclude from the last equality that 
\begin{equation} \label{eq:lconvq}
\|u-x_{k+1}\|\leq \frac{\|A^\top Q A -I\|}{1- \|A^\top Q A -I\|} \|u-x_{k}\|,  \qquad k=0, 1, \ldots .
\end{equation}
Since \( \|A^\top Q A -I\|<1/2 \),  we have \(\|A^\top Q A -I\|/(1- \|A^\top Q A-I\|)<1\). Therefore,   the inequality in \eqref{eq:lconvq} implies that \(\{x_k\}\) converges Q-linearly,  from any starting point,  to the 
solution \(u\) of Problem~\ref{prob:ne}. Hence the first part of the theorem is proven. 

Since \(u\in  \R^{n}\) is the solution of Problem~\ref{prob:ne}   the second part of the theorem follows  by using Proposition~\ref{pr:polscq}.
\end{proof3}

The next proposition gives a condition for the Newton iteration \eqref{eq:newtonc2} to finish in a finite
number of steps.
\begin{proposition} \label{pr:ft}
	If in \eqref{eq:newtonc2} it happens that $\mbox{sgn}(x_{k+1}^+)=\mbox{sgn}(x_k^+)$,   then $x_{k+1}$ is a solution of Problem~\ref{prob:ne} and \( Ax_{k+1}^+\)   is the solution of the  Problem~\ref{prob:qpscc}.
\end{proposition}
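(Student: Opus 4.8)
The plan is to show that the stopping condition forces the residual $F(x_{k+1})$ to vanish, after which the second claim follows immediately from Proposition~\ref{pr:polscq}. The starting point is the equivalent form \eqref{eq:mss} of the Newton step, namely $S(x_k)x_{k+1}=-A^\top b$, together with the two identities already recorded in the text preceding \eqref{eq:mss}: that $\mbox{diag}(\mbox{sgn}(x^+))x=x^+$ for every $x\in\R^n$, and consequently that $S(x)x=F(x)-A^\top b$.

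First I would expand $S(x_k)x_{k+1}$ using the definition \eqref{eq:subgrad} of $S$, which gives $\left[A^\top QA-I\right]\mbox{diag}(\mbox{sgn}(x_k^+))x_{k+1}+x_{k+1}$. The key step is to invoke the hypothesis $\mbox{sgn}(x_{k+1}^+)=\mbox{sgn}(x_k^+)$, which yields the matrix identity $\mbox{diag}(\mbox{sgn}(x_k^+))=\mbox{diag}(\mbox{sgn}(x_{k+1}^+))$. Applying this substitution and then the identity $\mbox{diag}(\mbox{sgn}(x_{k+1}^+))x_{k+1}=x_{k+1}^+$ turns the expression into $\left[A^\top QA-I\right]x_{k+1}^+ +x_{k+1}$, which by the definition \eqref{eq:fuc} of $F$ equals $F(x_{k+1})-A^\top b$. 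Comparing this with $S(x_k)x_{k+1}=-A^\top b$ gives $F(x_{k+1})-A^\top b=-A^\top b$, hence $F(x_{k+1})=0$, so $x_{k+1}$ solves Problem~\ref{prob:ne}.

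For the second assertion, once $x_{k+1}$ has been shown to be a solution of Problem~\ref{prob:ne}, Proposition~\ref{pr:polscq} applies directly and gives that $Ax_{k+1}^+$ is a solution of Problem~\ref{prob:qpscc}, which completes the argument.

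As for difficulty, this proof presents no genuine obstacle: it is essentially a one-line algebraic observation. The only thing to notice is that the stopping condition is exactly what is needed to replace the diagonal sign matrix $\mbox{diag}(\mbox{sgn}(x_k^+))$ appearing inside $S(x_k)$ by $\mbox{diag}(\mbox{sgn}(x_{k+1}^+))$ when it acts on $x_{k+1}$, after which the identity $\mbox{diag}(\mbox{sgn}(x^+))x=x^+$ collapses the Newton equation \eqref{eq:mss} directly into $F(x_{k+1})=0$. No contraction or convergence estimates are required, and in particular the condition \eqref{eq:CC2} plays no role; the well-definedness of the iterate, guaranteed unconditionally by Lemma~\ref{nonsingGC}, is all that the underlying iteration needs.
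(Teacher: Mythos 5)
Your proof is correct and follows essentially the same route as the paper's: both substitute $\mbox{diag}(\mbox{sgn}(x_k^+))=\mbox{diag}(\mbox{sgn}(x_{k+1}^+))$ into the Newton equation, apply the identity $\mbox{diag}(\mbox{sgn}(x_{k+1}^+))x_{k+1}=x_{k+1}^+$ to obtain $F(x_{k+1})=0$, and then invoke Proposition~\ref{pr:polscq}. Your observation that condition \eqref{eq:CC2} plays no role here is also consistent with the paper, which states the result without that assumption.
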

\begin{proof}
	If $\mbox{sgn}(x_{k+1}^+)=\mbox{sgn}(x_k^+)$ in equation \eqref{eq:newtonc2}, then it becomes 
	\begin{equation}\label{ek}
		\left\{\left[A^\top Q A-I\right]\mbox{diag}(\mbox{sgn}(x_{k+1}^+))+I\right\}x_{k+1}=-A^\top b.
	\end{equation}
	Since $\mbox{diag}(\mbox{sgn}(x_{k+1}^+))x_{k+1}=x_{k+1}^+$, the last equality  yields
	\[
	\left[A^\top QA-I\right]x_{k+1}^++x_{k+1}=-A^\top b, 
	\] 
	which implies that  $x_{k+1}$ is a  solution of   Problem~\ref{prob:ne} and,  by using  Proposition~\ref{pr:polscq},
	it follows  that \( Ax_{k+1}^+\)   is the solution of the  
	Problem~\ref{prob:qpscc}.
\end{proof}

The next proposition shows that the semi-smooth Newton sequence $\{x_k\}$, defined in \eqref{eq:newtonc2},  is bounded and gives a formula for any accumulation point of it, without assuming condition \eqref{eq:CC2}.
\begin{proposition} \label{pr:bounded}
         The semi-smooth Newton sequence $\{x_k\}$,  defined in \eqref{eq:newtonc2},   is bounded from any starting point. Moreover, for each accumulation point $\bar x$ of  $\{x_k\}$ there exists 
	 an $\hat x \in \mathbb{R}^n$ such that
\begin{equation} \label{eq:sap}
 \left(\left(A^\top Q A-I\right) \mbox{diag}(\mbox{sgn}(\hat{x}^+))+I\right){\bar x}=-A^\top b.
\end{equation}
	 In particular, if   \(\mbox{sgn}(\bar x^+)=\mbox{sgn}(\hat x^+)\),   then \( \bar x\)  is a solution of Problem~\ref{prob:ne} and \( A\bar x^+\)   is the solution of Problem~\ref{prob:qpscc}.

\end{proposition}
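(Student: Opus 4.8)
The plan is to exploit the key structural observation that the iteration \eqref{eq:newtonc2} can produce only finitely many distinct vectors. First I would note that for any $x\in\R^n$ the entries of $x^+$ are nonnegative, so each component of $\mbox{sgn}(x^+)$ equals $0$ or $1$; hence the diagonal matrix $\mbox{diag}(\mbox{sgn}(x^+))$ ranges over a set of at most $2^n$ matrices. By Lemma~\ref{nonsingGC} the matrix $\left[A^\top QA-I\right]\mbox{diag}(\mbox{sgn}(x^+))+I$ is nonsingular for every such sign pattern, so the right-hand side of \eqref{eq:newtonc2} takes at most $2^n$ distinct values. Consequently every iterate $x_k$ with $k\ge1$ lies in this finite set, and adjoining the single term $x_0$ does not affect boundedness; thus $\{x_k\}$ is bounded for any starting point.

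For the accumulation-point formula I would use that a sequence eventually confined to a finite (hence discrete, closed) set attains each of its accumulation points infinitely often. So if $\bar x$ is an accumulation point, there is a strictly increasing index sequence $k_j\ge1$ with $x_{k_j}=\bar x$. Rewriting \eqref{eq:newtonc2} at step $k_j-1$ yields, for every $j$,
\[
\left(\left[A^\top QA-I\right]\mbox{diag}(\mbox{sgn}(x_{k_j-1}^+))+I\right)\bar x=-A^\top b .
\]
Since the predecessors $x_{k_j-1}$ (those with $k_j-1\ge1$) also lie in the same finite set, the pigeonhole principle gives a fixed vector $\hat x$ occurring as $x_{k_j-1}$ for infinitely many $j$; substituting this $\hat x$ into the displayed identity produces exactly \eqref{eq:sap}.

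To obtain the final assertion I would assume $\mbox{sgn}(\bar x^+)=\mbox{sgn}(\hat x^+)$, so that $\mbox{diag}(\mbox{sgn}(\hat x^+))=\mbox{diag}(\mbox{sgn}(\bar x^+))$, and then invoke the identity $\mbox{diag}(\mbox{sgn}(\bar x^+))\,\bar x=\bar x^+$ already recorded in the text. Inserting this into \eqref{eq:sap} collapses it to $\left[A^\top QA-I\right]\bar x^++\bar x+A^\top b=0$, which is precisely the semi-smooth equation \eqref{equation}; hence $\bar x$ solves Problem~\ref{prob:ne}, and Proposition~\ref{pr:polscq} then yields that $A\bar x^+$ solves Problem~\ref{prob:qpscc}.

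I do not anticipate a genuine obstacle: the entire argument rests on the finiteness of the admissible sign patterns combined with the uniform nonsingularity already supplied by Lemma~\ref{nonsingGC}, neither of which requires the contraction condition \eqref{eq:CC2}. The only point needing care is the bookkeeping for accumulation points, namely passing to a subsequence along which both $\bar x$ and its predecessor $\hat x$ stabilize; but this is a routine pigeonhole step once the finiteness of the iterate set is established.
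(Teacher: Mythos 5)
Your proof is correct, but the route to boundedness is genuinely different from the paper's. You observe that the update \eqref{eq:newtonc2} depends on $x_k$ only through $\mbox{diag}(\mbox{sgn}(x_k^+))$, whose diagonal entries are $0$ or $1$, so that (using Lemma~\ref{nonsingGC} for invertibility) every iterate $x_k$ with $k\ge1$ lies in a fixed set of at most $2^n$ vectors; boundedness is immediate, every accumulation point is attained infinitely often, and \eqref{eq:sap} follows by reading off the iteration at a predecessor, with a pigeonhole step (not even strictly needed --- a single occurrence suffices) to fix $\hat x$. The paper instead argues by contradiction: assuming unboundedness, it extracts a subsequence with a constant sign pattern $\mbox{sgn}(\tilde x^+)$, normalizes $x_{k_j+1}/\|x_{k_j+1}\|\to v\ne0$ using compactness of the unit sphere, divides \eqref{eq:mss} by $\|x_{k_j+1}\|$, and passes to the limit to contradict the nonsingularity of $S$; the formula \eqref{eq:sap} is then obtained by a separate limiting argument along a convergent subsequence with frozen sign pattern. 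Your argument is more elementary and in fact yields a strictly stronger conclusion (the tail of the sequence is confined to a finite set, hence the sequence is eventually periodic and accumulation points are attained values), whereas the paper's normalization--compactness scheme is the standard template that would survive perturbations of the iteration in which $x_{k+1}$ is not literally a function of finitely many sign patterns. The final implication (substituting $\mbox{sgn}(\bar x^+)=\mbox{sgn}(\hat x^+)$ and using $\mbox{diag}(\mbox{sgn}(\bar x^+))\bar x=\bar x^+$ to recover \eqref{equation}, then invoking Proposition~\ref{pr:polscq}) coincides with the paper's.
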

\begin{proof}
 	Suppose to the contrary that $\{x_k\}$ is unbounded. Note that, as  there are only finitely many vectors 
	$\mbox{sgn}(x_{k}^+)$ with coordinates $0$ or $1$,  there exists a vector $\tilde{x}\in\mathbb{R}^m$ and a subsequence $\{x_{k_i}\}$ of $\{x_k\}$ such that 
	$$
	 \qquad\mbox{sgn}(x_{k_i}^+)\equiv\mbox{sgn}(\tilde{x}^+).
	$$
	Now, since $\{x_k\}$ is unbounded and  the unit sphere is compact, there exists a vector $v\in\mathbb{R}^m$ and a subsequence $\{x_{k_j}\}$ of $\{x_{k_i}\}$ such that 
	\begin{equation} \label{eq:sssp}
	\lim_{j\to\infty}{\|x_{k_j+1}\|}=\infty,  \qquad \lim_{j\to\infty}\frac{x_{k_j+1}}{\|x_{k_j+1}\|}=v\ne0.
	\end{equation}
	Therefore, as $\mbox{sgn}(x_{k_j}^+)= \mbox{sgn}(\tilde{x}^+)$ for all $j$, the definition of the semi-smooth Newton sequence $\{x_k\}$ in 
	\eqref{eq:mss} implies
	$$
	\left(\left(A^\top QA-I\right)\mbox{diag}(\mbox{sgn}(\tilde{x}^+))+I\right)\frac{x_{k_j+1}}{\|x_{k_j+1}\|}=-\frac{A^\top b}{\|x_{k_j+1}\|},\qquad j=0,1,2,\ldots.
	$$
	By tending with $j$ to infinity in the above equality and by taking into account \eqref{eq:sssp}, it follows that 
	$$
	\left(\left(A^\top Q A-I\right)\mbox{diag}(\mbox{sgn}(\tilde{x}^+))+I\right)v=0, 
	$$
	which contradicts the first part of the Lemma \ref{nonsingGC} since $v\ne0$. Therefore, the sequence  $\{x_k\}$ is bounded, which proves the first part of the 
	proposition.   
	 
	For proving the second part of the proposition, let  $\bar x$ be an accumulation point of the sequence  $\{x_k\}$.  Then, since there are only finitely many vectors
	$\mbox{sgn}(x_{k}^+)$ with coordinates  $0$ or $1$, there exists a vector $\hat{x}\in \mathbb{R}^m$ and a subsequence $\{x_{k_j}\}$ of $\{x_k\}$ such that 
	$$
	\lim_{j\to\infty}{x_{k_j+1}}=\bar x, \qquad   \mbox{sgn}(x_{k_j}^+)\equiv  \mbox{sgn}(\hat{x}^+),
	$$
	Since  $\mbox{sgn}(x_{k_j}^+)=\mbox{sgn}(\hat{x}^+)$  for all $j$, the definition of the semi-smooth Newton sequence $\{x_k\}$ in
	\eqref{eq:newtonc2} implies	
	$$ 
	\left(\left(A^\top QA-I\right) \mbox{diag}(\mbox{sgn}(\hat{x}^+))+I\right)x_{k_j+1}=-A^\top b,\qquad j=0,1,2,\ldots.
	$$
	Taking the limit in the last equality as $k_j$ goes to $\infty$ ,  the second part of the proposition follows.
	
	Finally,   for proving last part of the proposition, use  the assumption  \(\mbox{sgn}(\bar x^+)=\mbox{sgn}(\hat x^+)\)  and  \eqref{eq:sap} to obtain
	\[
	\left(\left(A^\top Q A-I\right) \mbox{diag}(\mbox{sgn}(\bar x^+))+I\right){\bar x}=-A^\top b.
	\]
Therefore, taking into account that \(\mbox{diag}(\mbox{sgn}(\bar x^+))\bar x=\bar x^+\) it is easy to conclude from the above equality that   
\(\bar x^+\) is a  solution of   Problem~\ref{prob:ne} and,  by using   Proposition~\ref{pr:polscq}, we obtain   that \( A\bar x^+\)   is the solution of the  Problem~\ref{prob:qpscc}, which conclude the proof of the proposition.
\end{proof}
\section{Computational results} \label{sec:ctest}
\label{sec:computationalresults}

In this section we test our semi-smooth Newton method (\ref{eq:newtonc2}) to find solutions on generated random instances of 
Problem~\ref{prob:ne}. We present two types of experiments. In one of them, we guarantee that for each test problem the hypotheses given in 
Theorem~\ref{th:mrq} are satisfied and in the other they are not.

 All programs were implemented in MATLAB Version 7.11 64-bit and run on a $3.40 GHz$ Intel Core $i5-4670$ with $8.0GB$ of RAM. All MATLAB codes and generated data of this paper are available in \url{http://orizon.mat.ufg.br/pages/34449-publications}. 

All experiments are based on the following general considerations:
\begin{itemize}
    \item In order to accurately measure the method's runtime for a problem, each one of the test problems  was solved $10$ times and the runtime data collected. Then, we defined the corresponding  {\it method's  runtime for a problem} as the median of these measurements.

\item Let $\Tol X\in \R_+$ be a relative bound, we consider that the method converged to the solution and stopped the execution when, for some $k$, the condition 
$$
\|u-x_{k}\|<\Tol X(1+\|u\|), 
$$ is satisfied. If the previous stopping criteria are not met within $100$ iterations, we declare that the method did not converge.
\end{itemize}

\subsection{When the hypotheses of Theorem~\ref{th:mrq} are satisfied}
In this experiment, we studied the behavior of the method on sets of $100$ randomly generated test problems of dimension $n=2000,3000,4000,5000$,
respectively. Furthermore, we analyzed the influence of the initial point in the convergence of the method on $1000$ randomly generated test problems of dimension
$n = 100$. 
For each test problem in this experiment the hypotheses given in the Theorem~\ref{th:mrq} are satisfied, generating each of them as follows:
\begin{enumerate}
     \item To construct the matrices $A,Q\in \R^{n\times n}$ satisfying the assumption (\ref{eq:CC2}) in Theorem~\ref{th:mrq}, we first chose a
	     random number $\beta$ from the standard uniform distribution on the open interval $(0,1/2)$. Secondly, we compute the symmetric
	     positive definite matrix $Q=B^TB$, where $B$ is a generated $n\times n$ real nonsingular matrix containing random values drawn 
	     from the uniform distribution on the interval $[-10^6,10^6]$. Then, we compute the matrices $S,V$ and $D$, respectively, from the 
	     singular value decomposition of a generated $n\times n$ real nonsingular matrix containing random values drawn
from the uniform distribution on the interval $[-10^6,10^6]$. Finally, we compute the matrix $A$ from the system of linear equations 
$$
BA =S~\mbox{sqrt}\left(I+\frac{\beta}{\sigma} V\right)~D, 
$$
were  $\sigma$ is the largest singular value of $V$  and  $\mbox{sqrt}(I+\frac{\beta}{\sigma} V)$ is the square root of the  diagonal matrix $I+\frac{\beta}{\sigma} V$. 
\item   We have chosen the solution $u\in \R^{n}$ containing random values drawn from the uniform distribution on the interval $[-10^6,10^6]$ and
	then we have computed $b\in \R^{n}$ from equation (\ref{equation}). 
\item Finally we have chosen a  starting point $x_0\in \R^{n}$ containing random values drawn from the uniform distribution on the interval $[-10^6,10^6]$.
\end{enumerate}

In accordance with the theoretical convergence of the method, ensured by Theorem~\ref{th:mrq}, the computational convergence is obtained in all cases.

The computational results to analyze the behavior of the method on sets of $100$ generated random test problems of different dimensions, are reported in Table~\ref{tab:example1}. From these, it can be noted that for the same dimension, to achieve higher accuracy, the method does not experience a significant increase in the number of iterations or in runtime. On the other hand, the increase in the dimension of the problems does not necessarily involve  an increase in the number of iterations to achieve the same accuracy, however, a larger runtime is consumed. A larger runtime consumption is associated with the fact that the semi-smooth Newton method~(\ref{eq:newtonc2}) requieres the solution of a linear system in each iteration, whose computational effort increases with the dimension of the problem. Another important aspect that can be checked in Table~\ref{tab:example1} is the ability of the method to converge  in about three iterations on average.

\begin{table}[htbp]
\centering
\renewcommand{\arraystretch}{1.2}
\begin{tabular}{|c|lll|rrr|}
\hline
\multicolumn{1}{|c|}{$n$}&\multicolumn{3}{c|}{Total Iterations}&\multicolumn{3}{c|}{Total Time}\\\cline{1-7}
 $2000$& 284&  299&  300&  334.243&   351.823&   352.922\\
 $3000$& 279&  293&  295& 1064.158&  1117.909&  1124.941\\
 $4000$& 281&  303&  303& 2481.145&  2676.010&  2674.550\\
 $5000$& 283&  303&  305& 4927.101&  5261.154&  5142.072\\
\hline\hline
\multicolumn{1}{|c|}{$\Tol X$}&\multicolumn{1}{l}{$10^{-6}$}&\multicolumn{1}{l}{$10^{-8}$}&\multicolumn{1}{l|}{$10^{-10}$}&\multicolumn{1}{c}{$10^{-6}$}&\multicolumn{1}{c}{$10^{-8}$}&\multicolumn{1}{c|}{$10^{-10}$}\\
\hline
\end{tabular}
\caption{Total overall iterations and total time in seconds, performed and consumed, respectively by the semi-smooth Newton method (\ref{eq:newtonc2})  to solve the $100$ test problems  of each dimension for different accuracies.}
\label{tab:example1}
\end{table} 

In order to study the influence of the initial point in the convergence of the method,  we have generated $1000$ test problems of dimension 
$n = 100$ and we have associated to each of them $1000$ generated initial points. We have solved each problem with each of the $1000$ 
corresponding initial points. Then, we have computed the standard deviation ($\mbox{STD}$) $\overline{d}_i$ and the mean value (\mbox{MEAN})
$\overline{m}_i$ of the number of iterations performed by the method to solve the problem $i$ taking each one of the $1000$ initial points.
Finally we have computed the mean of all $\overline{d}_i$  and the mean of all $\overline{m}_i,\;\;i=1,...,1000$. All cases converged, indicating
robustness of the method with respect to the starting point. The results are shown in Table~\ref{tab:example2}. The standard deviation of  the
number of iterations performed by the method to solve the problem $i$ with  the $1000$ initial points  gives us an idea of the influence of the
initial point in the number of iterations performed by the method in each problem. The reported means of these  standard deviation values give 
us an idea of the influence of the initial point in the number  of iterations performed by the method in all the problems in general. The results
in the table show that on average the number of iterations performed by our method to find the solution for a problem varies only very slightly 
with the chosen starting point. Again we see that the average number of iterations performed is less than three.

\begin{table}[htbp]
\centering
\renewcommand{\arraystretch}{1.2}
\begin{tabular}{ccc}
\hline
Tol X&\multicolumn{1}{c}{$\mbox{MEAN}\left(\{\overline{d}_i\}_{i=1,...,1000}\right)$}&\multicolumn{1}{c}{$\mbox{MEAN}\left(\{\overline{m}_i\}_{i=1,...,1000}\right)$}\\\hline\hline
$10^{-6}$&0.241& 2.337\\
$10^{-8}$&0.249&  2.348\\
$10^{-10}$&0.249&  2.348\\\hline
\end{tabular}
\caption{Influence of the initial point in the convergence of the semi-smooth Newton method (\ref{eq:newtonc2}) on a total of  $1000$ test problems of dimension $n=100$ each of them with $1000$ generated initial points for different accuracies.}
\label{tab:example2}
\end{table} 

\subsection{When the hypotheses of Theorem~\ref{th:mrq} are not satisfied} 
In this experiment, we studied the behavior of the method on $1000$ test problems of dimension $n=100$, where the hypotheses given in the Theorem~\ref{th:mrq} are not all satisfied.

In this case, the test problems were built almost as in the previous experiment. The only difference was in the construction of the matrices
$A,Q\in \R^{n\times n}$ not satisfying the assumption (\ref{eq:CC2}) of Theorem~\ref{th:mrq}. Namely, we chose the random number $\beta$ from the
standard uniform distribution on the  interval $[lb,ub)$, where $\frac{1}{2}\leq lb<ub$. Then, $\|A^{T}QA-I\|=\beta$.

According to the obtained numerical results, we can conjecture that our method converges to a much broader class of problems, not satisfying the
hypotheses of Theorem~\ref{th:mrq}. However we detected that convergence with high accuracy to the solution largely depends on the magnitude of
the value of the norm in condition (\ref{eq:CC2}). This idea can be observed inspecting Table \ref{tab:example3}. As the magnitude of the value
of the norm in (\ref{eq:CC2}) increases, the number of problems for which the method converges  decreases, and decreases the number of
problems for which the method converges to the solution with greater accuracy. It can be also seen that for the same value of the norm in (\ref{eq:CC2}), the number of problems for which the method converges with greater precision reduces. This phenomenon, of course, is not associated to the convergence of the method for a specific problem, but, rather, there is an optimum accuracy achievable due to the accumulated errors. Small tolerances do not ensure obtaining accurate results. It can be the case that convergence is overlooked and unnecessary iterations are performed. It is important to note in the table that, even when the hypothesis is unfulfilled, the method converges for these problems in an average of less than seven iterations, which means an increase of approximately four iterations with respect of the previous experiments in which the hypotheses were fulfilled.

\begin{table}[htbp]
\centering
\renewcommand{\arraystretch}{1.2}
\begin{tabular}{|c|rrr|rrr|}
\hline
\multicolumn{1}{|c|}{$\beta\in[lb,ub)$}&\multicolumn{3}{c|}{Solved Problems}&\multicolumn{3}{c|}{Iterations}\\\cline{1-7}
 $[0.5,10^3)$ &1000&  1000&  994& 5.813&  5.813&  5.813\\
 $[10^3,10^4)$&1000&  1000&  966& 6.318&  6.318&  6.316\\
 $[10^4,10^5)$& 1000&   995&  539& 6.389&  6.389&  6.455\\
 $[10^5,10^6)$&1000&   964&    3& 6.436&  6.438&  6\\
 $[10^6,10^7)$& 995&   547&    0& 6.467&  6.497&  -\\
 $[10^7,10^8)$& 960&    3 &    0& 6.436&  6.667&  -\\\hline\hline
\multicolumn{1}{|c|}{Tol X}&\multicolumn{1}{c|}{$10^{-6}$}&\multicolumn{1}{c|}{$10^{-8}$}&\multicolumn{1}{c|}{$10^{-10}$}&\multicolumn{1}{c|}{$10^{-6}$}&\multicolumn{1}{c|}{$10^{-8}$}&\multicolumn{1}{c|}{$10^{-10}$}\\
\hline
\end{tabular}
\caption{Number of problems solved by the semi-smooth Newton method (\ref{eq:newtonc2}) on a total of  $1000$ test problems of dimension $n=100$ of each condition ($lb\leq\|A^{T}QA-I\|<ub$) for different accuracies, and the mean number of iterations performed by the semi-smooth Newton method (\ref{eq:newtonc2})  to solve one problem in each case.}
\label{tab:example3}
\end{table} 

\section{Conclusions} \label{sec:conclusions}
In this paper we studied  a special class of convex quadratic programs, namely, simplicial cone constrained convex quadratic programming
problems, which,  via  its optimality conditions, is reduced to  finding the unique solution of a  nonsmooth system of equations.  Our main 
result shows that, under a mild assumption on the simplicial cone, we can  apply  a semi-smooth Newton method for finding a unique solution of  
the  obtained  associated nonsmooth system of equations and  that  the generated sequence converges linearly to the  solution   for  any starting
point.  It would be interesting to see whether the used technique can be applied for solving  more general convex programs.    

Since the optimality condition of a simplicial cone constrained convex quadratic programming problem consists in a certain type of  linear
complementarity problem, which is equivalent to the problem of finding the unique solution of a nonsmooth system of equations, another  
interesting problem to address is to compare our semi-smooth Newton method   with   active set methods  
\cite{Bazaraa2006,LiuFathi2011,LiuFathi2012,Murty1988}. 

This paper is a continuation of  \cite{FerreiraNemeth2014}, where we studied the problem of projection onto a simplicial cone by using 
a semi-smooth Newton method. We expect that the results of this paper become a further step towards solving general convex optimization problems. We foresee further progress in this topic in the nearby future.


\begin{thebibliography}{10}

\bibitem{AbbasNemeth2012}
M.~Abbas and S.~Z. N{\'e}meth.
\newblock Solving nonlinear complementarity problems by isotonicity of the
  metric projection.
\newblock {\em J. Math. Anal. Appl.}, 386(2):882--893, 2012.

\bibitem{AlSultanMurty1992}
K.~S. Al-Sultan and K.~G. Murty.
\newblock Exterior point algorithms for nearest points and convex quadratic
  programs.
\newblock {\em Math. Programming}, 57(2, Ser. B):145--161, 1992.

\bibitem{BuschkeBorwein96}
H.~H. Bauschke and J.~M. Borwein.
\newblock On projection algorithms for solving convex feasibility problems.
\newblock {\em SIAM Rev.}, 38(3):367--426, 1996.

\bibitem{Bazaraa2006}
M.~S. Bazaraa, H.~D. Sherali, and C.~M. Shetty.
\newblock {\em Nonlinear programming}.
\newblock Wiley-Interscience [John Wiley \& Sons], Hoboken, NJ, third edition,
  2006.
\newblock Theory and algorithms.

\bibitem{BerkMarcus96}
R.~Berk and R.~Marcus.
\newblock Dual cones, dual norms, and simultaneous inference for partially
  ordered means.
\newblock {\em J. Amer. Statist. Assoc.}, 91(433):318--328, 1996.

\bibitem{censor07}
Y.~Censor, T.~Elfving, G.~T. Herman, and T.~Nikazad.
\newblock On diagonally relaxed orthogonal projection methods.
\newblock {\em SIAM J. Sci. Comput.}, 30(1):473--504, 2007/08.

\bibitem{censor01}
Y.~Censor, D.~Gordon, and R.~Gordon.
\newblock Component averaging: an efficient iterative parallel algorithm for
  large and sparse unstructured problems.
\newblock {\em Parallel Comput.}, 27(6):777--808, 2001.

\bibitem{Clarke1990}
F.~H. Clarke.
\newblock {\em Optimization and nonsmooth analysis}, volume~5 of {\em Classics
  in Applied Mathematics}.
\newblock Society for Industrial and Applied Mathematics (SIAM), Philadelphia,
  PA, second edition, 1990.

\bibitem{DeutschHundal1994}
F.~Deutsch and H.~Hundal.
\newblock The rate of convergence of {D}ykstra's cyclic projections algorithm:
  the polyhedral case.
\newblock {\em Numer. Funct. Anal. Optim.}, 15(5-6):537--565, 1994.

\bibitem{DontRockBook2009}
A.~L. Dontchev and R.~T. Rockafellar.
\newblock {\em Implicit functions and solution mappings}.
\newblock Springer Monographs in Mathematics. Springer, Dordrecht, 2009.
\newblock A view from variational analysis.

\bibitem{Dykstra83}
R.~L. Dykstra.
\newblock An algorithm for restricted least squares regression.
\newblock {\em J. Amer. Statist. Assoc.}, 78(384):837--842, 1983.

\bibitem{EkartNemethNemeth2009}
A.~Ek\'art, A.~B. N\'emeth, and S.~Z. N\'emeth.
\newblock Rapid heuristic projection on simplicial cones, 2010.

\bibitem{FerreiraNemeth2014}
O.~Ferreira and S.~N\'{e}meth.
\newblock Projection onto simplicial cones by a semi-smooth newton method.
\newblock {\em Optimization Letters}, pages 1--11, 2014.

\bibitem{Fol90}
J.~D. Foley, A.~van Dam, S.~K. Feiner, and J.~F. Hughes.
\newblock {\em Computer Graphics: Principles and Practice}.
\newblock Addison-Wesley systems programming series, 1990.

\bibitem{Frick1997}
H.~Frick.
\newblock Computing projections into cones generated by a matrix.
\newblock {\em Biometrical J.}, 39(8):975--987, 1997.

\bibitem{HiriartLemarecal1}
J.-B. Hiriart-Urruty and C.~Lemar{\'e}chal.
\newblock {\em Convex analysis and minimization algorithms: Fundamentals. {I}},
  volume 305 of {\em Grundlehren der Mathematischen Wissenschaften [Fundamental
  Principles of Mathematical Sciences]}.
\newblock Springer-Verlag, Berlin, 1993.

\bibitem{Xiaomi1998}
X.~Hu.
\newblock An exact algorithm for projection onto a polyhedral cone.
\newblock {\em Aust. N. Z. J. Stat.}, 40(2):165--170, 1998.

\bibitem{Huynh1992}
T.~Huynh, C.~Lassez, and J.-L. Lassez.
\newblock Practical issues on the projection of polyhedral sets.
\newblock {\em Ann. Math. Artificial Intelligence}, 6(4):295--315, 1992.
\newblock Artificial intelligence and mathematics, II.

\bibitem{IsacNem86}
G.~Isac and A.~B. N{\'e}meth.
\newblock Monotonicity of metric projections onto positive cones of ordered
  {E}uclidean spaces.
\newblock {\em Arch. Math. (Basel)}, 46(6):568--576, 1986.

\bibitem{IsacNem92}
G.~Isac and A.~B. N{\'e}meth.
\newblock Isotone projection cones in {E}uclidean spaces.
\newblock {\em Ann. Sci. Math. Qu\'ebec}, 16(1):35--52, 1992.

\bibitem{LiuFathi2011}
Z.~Liu and Y.~Fathi.
\newblock An active index algorithm for the nearest point problem in a
  polyhedral cone.
\newblock {\em Comput. Optim. Appl.}, 49(3):435--456, 2011.

\bibitem{LiuFathi2012}
Z.~Liu and Y.~Fathi.
\newblock The nearest point problem in a polyhedral set and its extensions.
\newblock {\em Comput. Optim. Appl.}, 53(1):115--130, 2012.

\bibitem{Mangasarian2009}
O.~L. Mangasarian.
\newblock A generalized {N}ewton method for absolute value equations.
\newblock {\em Optim. Lett.}, 3(1):101--108, 2009.

\bibitem{Morillas2005}
P.~M. Morillas.
\newblock Dykstra's algorithm with strategies for projecting onto certain
  polyhedral cones.
\newblock {\em Appl. Math. Comput.}, 167(1):635--649, 2005.

\bibitem{Murty1988}
K.~G. Murty.
\newblock {\em Linear complementarity, linear and nonlinear programming},
  volume~3 of {\em Sigma Series in Applied Mathematics}.
\newblock Heldermann Verlag, Berlin, 1988.

\bibitem{MurtyFathi1982}
K.~G. Murty and Y.~Fathi.
\newblock A critical index algorithm for nearest point problems on simplicial
  cones.
\newblock {\em Math. Programming}, 23(2):206--215, 1982.

\bibitem{NemethNemeth2009}
A.~B. N\'emeth and S.~Z. N\'emeth.
\newblock How to project onto an isotone projection cone.
\newblock {\em Linear Algebra Appl.}, 433(1):41--51, 2010.

\bibitem{Nemeth20091}
S.~Z. N\'emeth.
\newblock Characterization of latticial cones in {H}ilbert spaces by
  isotonicity and generalized infimum.
\newblock {\em Acta Math. Hungar.}, 127(4):376--390, 2010.

\bibitem{Nemeth2010-2}
S.~Z. N\'emeth.
\newblock Isotone retraction cones in {H}ilbert spaces.
\newblock {\em Nonlinear Anal.}, 73(2):495--499, 2010.

\bibitem{LiSun93}
L.~Q. Qi and J.~Sun.
\newblock A nonsmooth version of {N}ewton's method.
\newblock {\em Math. Programming}, 58(3, Ser. A):353--367, 1993.

\bibitem{scolnik08}
H.~D. Scolnik, N.~Echebest, M.~T. Guardarucci, and M.~C. Vacchino.
\newblock Incomplete oblique projections for solving large inconsistent linear
  systems.
\newblock {\em Math. Program.}, 111(1-2, Ser. B):273--300, 2008.

\bibitem{Stewart77}
G.~W. Stewart.
\newblock On the perturbation of pseudo-inverses, projections and linear least
  squares problems.
\newblock {\em SIAM Rev.}, 19(4):634--662, 1977.

\bibitem{MingGuo-LiangHong-BinKaiWang2007}
M.~Tan, G.-L. Tian, H.-B. Fang, and K.~W. Ng.
\newblock A fast {EM} algorithm for quadratic optimization subject to convex
  constraints.
\newblock {\em Statist. Sinica}, 17(3):945--964, 2007.

\bibitem{ujvari2007projection}
M.~Ujv{\'a}ri.
\newblock On the projection onto a finitely generated cone, 2007.

\bibitem{Shusheng2000}
S.~Xu.
\newblock Estimation of the convergence rate of {D}ykstra's cyclic projections
  algorithm in polyhedral case.
\newblock {\em Acta Math. Appl. Sinica (English Ser.)}, 16(2):217--220, 2000.

\end{thebibliography}
\end{document}